\newtheorem{prop}{Proposition}
\newtheorem{lemma}[prop]{Lemma}
\newtheorem{theorem}[prop]{Theorem}
\theoremstyle{definition}
\newtheorem{example}[prop]{Example}
\newcommand{\N}{\mathbb{N}}
\newcommand{\seqnum}[1]{\href{https://oeis.org/#1}{\rm \underline{#1}}}
\newcommand{\mylabel}[2]{#2\def\@currentlabel{#2}\label{#1}}
\begin{document}
\tikzset{mystyle/.style={matrix of nodes,
        nodes in empty cells,
        row 1/.style={nodes={draw=none}},
        row sep=-\pgflinewidth,
        column sep=-\pgflinewidth,
        nodes={draw,minimum width=1cm,minimum height=1cm,anchor=center}}}
\tikzset{mystyleb/.style={matrix of nodes,
        nodes in empty cells,
        row sep=-\pgflinewidth,
        column sep=-\pgflinewidth,
        nodes={draw,minimum width=1cm,minimum height=1cm,anchor=center}}}

\title{Graph Labelings Obtainable by Random Walks}

\author{Sela Fried$^{*\dagger}$}
\thanks{$^{*}$ Corresponding author.}
\thanks{$^{\dagger}$ Department of Computer Science, Israel Academic College,
52275 Ramat Gan, Israel.
\\
\href{mailto:friedsela@gmail.com}{\tt friedsela@gmail.com}} 
\author{Toufik Mansour$^{\sharp}$}
\thanks{$^{\sharp}$ Department of Mathematics, University of Haifa, 3498838 Haifa,
Israel.\\
\href{mailto:tmansour@univ.haifa.ac.il}{\tt tmansour@univ.haifa.ac.il}}

\iffalse
\author{Sela Fried}
\address{Department of Computer Science, Israel Academic College,
52275 Ramat Gan, Israel}
\email{friedsela@gmail.com}

\author{Toufik Mansour}
\address{Department of Mathematics, University of Haifa, 3498838 Haifa,
Israel}
\email{tmansour@univ.haifa.ac.il}
\fi
\maketitle

\begin{abstract}
We initiate the study of what we refer to as random walk labelings of graphs. These are graph labelings that are obtainable by performing a random walk on the graph, such that the labeling occurs increasingly whenever an unlabeled vertex is encountered. Some of the results we obtain involve sums of inverses of binomial coefficients, for which we obtain new identities. In particular, we prove that $\sum_{k=0}^{n-1}2^{k}(2k+1)^{-1}\binom{2k}{k}^{-1}\binom{n+k}{k}=\binom{2n}{n}2^{-n}\sum_{k=0}^{n-1}2^{k}(2k+1)^{-1}\binom{2k}{k}^{-1}$, thus confirming a conjecture of Bala.

\bigskip

\noindent \textbf{Keywords:} Random walk, graph labeling, inverse binomial coefficients.

\smallskip

\noindent \textbf{Math.~Subj.~Class.:} 05C78, 05A10, 05A15, 05C81.
\end{abstract}

\section{Introduction}
Suppose that in a beehive the queen bee wanders about and whenever she detects an empty cell, she lays an egg in it. In how many ways can this be done, assuming that the process may begin at any cell? Formally, the process can be modeled as a graph labeling by a random walk. Figure \ref{fig:100} below gives an example using the path graph on $7$ vertices.

\begin{figure}[H]
\centering
\scalebox{1.00}{
\begin{tikzpicture}[shape = circle, node distance=4cm and 7cm, minimum size=0.5cm]
\node[draw] at (1, 0)   (1) {$7$};
\node[draw] at (2, 0)   (2) {$6$};
\node[draw] at (3, 0)   (3) {$5$};
\node[draw] at (4, 0)   (4) {$3$};
\node[draw] at (5, 0)   (5) {$2$};
\node[draw] at (6, 0)   (6) {$1$};
\node[draw] at (7, 0)   (7) {$4$};

\draw[-] (1) -- (2);
\draw[-] (2) -- (3);
\draw[-] (3) -- (4);
\draw[-] (4) -- (5);
\draw[-] (5) -- (6);
\draw[-] (6) -- (7);
\end{tikzpicture}
\hspace{0.5cm}
\begin{tikzpicture}[shape = circle, node distance=4cm and 7cm, minimum size=0.5cm]
\node[draw] at (1, 0)   (1) {$4$};
\node[draw] at (2, 0)   (2) {$6$};
\node[draw] at (3, 0)   (3) {$5$};
\node[draw] at (4, 0)   (4) {$3$};
\node[draw] at (5, 0)   (5) {$2$};
\node[draw] at (6, 0)   (6) {$1$};
\node[draw] at (7, 0)   (7) {$7$};

\draw[-] (1) -- (2);
\draw[-] (2) -- (3);
\draw[-] (3) -- (4);
\draw[-] (4) -- (5);
\draw[-] (5) -- (6);
\draw[-] (6) -- (7);
\end{tikzpicture}}
\caption{Of the two labelings of the path graph on $7$ vertices given here, only the left is obtainable by a random walk.}\label{fig:100}
\end{figure}
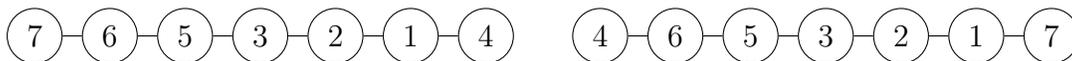

The literature on graph labeling is vast and many different kinds of labelings have been previously studied (see \cite{G} for a comprehensive literature survey). Nevertheless, we know of no works that are concerned with the question raised above. This work, that initiates the study of this question, is structured as follows: We begin with a short preliminary section, in which we define the concept of a random walk graph labeling and provide several elementary examples. In Section \ref{sec; 3}, that contains our main results, we calculate the number of random walk labelings of the King's graph and of the grid graph. The expression for the latter consists of a sum involving inverses of binomial coefficients. In the rest of the section we analyse this expression. In particular, we prove that
$$\sum_{k=0}^{n-1}\frac{2^{k}}{(2k+1)\binom{2k}{k}}\binom{n+k}{k}=\frac{\binom{2n}{n}}{2^{n}}\sum_{k=0}^{n-1}\frac{2^{k}}{(2k+1)\binom{2k}{k}},$$ thus confirming a conjecture of Bala.

\section{Preliminaries and first results}
Let $m,n\in\N$ be used throughout the work. We denote by $[n]$ the set $\{1,2,\ldots,n\}$. If $G$ is a graph with vertex set $V$ and edge set $E$, we write $G = (V, E)$ (we consider only simple graphs, i.e., graphs without multiple edges and loops). A \emph{labeling} of a graph $G = (V,E)$ of order $|V| = n$ is a bijection $\sigma\colon V\to [n]$. A \emph{random walk labeling} of $G$ is a labeling that is obtainable by performing the following process:
\begin{enumerate}
    \item Set $i=1$ and let $v\in V$. Define $\sigma(v)=i$.
    \item As long as there are vertices that are not labeled, pick $w\in V$ that is adjacent to $v$ and replace $v$ with $w$. If $v$ is not labeled, increase $i$ by $1$ and define $\sigma(v)=i$.
\end{enumerate} We denote by $\mathcal{L}(G)$ the number of random walk labelings of a graph $G$. Let us give a few easy examples:

\begin{example}\label{ex;110}
\begin{enumerate}
    \item [(a)]\label{a1} We have $\mathcal{L}(K_n)=n!$, where $K_n$ is the complete graph on $n$ vertices.
    \item [(b)]\label{b1} We have $\mathcal{L}(P_n)=2^{n-1}$ (\seqnum{A000079}), where $P_n$ is the path graph on $n$ vertices. Indeed, suppose that we start at the $k$th vertex, for $k\in[n]$. Then there are $k-1$ vertices on our left and $n-k$ vertices on our right. Since the vertices on either side must be labeled ``outwards", the random walk labeling is completely determined by the order in which we alternate between the two sides. We conclude that there are $\binom{n-1}{k-1}$ random walk labelings that begin at the $k$th vertex. Thus, $$\mathcal{L}(P_n)=\sum_{k=1}^n\binom{n-1}{k-1}=2^{n-1}.$$
    \item [(c)]\label{c1} We have $\mathcal{L}(C_n)=n2^{n-2}$ (\seqnum{A057711}), where $C_n$ is the cycle graph on $n$ vertices. Indeed, there are $n$ vertices at which we can start. Again, there are two sides between which we can alternate, but, this time, both sides are $n-1$ vertices long. Thus, as long as there are at least two unlabeled vertices, we have two possibilities at each step. This happens exactly $n-2$ times.
\end{enumerate}
\end{example}

In the next section, we consider more challenging graphs.

\section{Main results}\label{sec; 3}

\subsection{The king's graph}
The \emph{king's graph of size $m\times n$} is the graph whose vertices correspond to the $mn$ squares in a chessboard of size $m\times n$ and two vertices are adjacent if and only if a king can move between the corresponding squares in one move (cf.~\cite[p.~223]{C}). We denote by $KG_{m,n}$ the King's graph of size $m\times n$.
Calculating $\mathcal{L}(KG_{m,n})$ for arbitrary $m$ and $n$ seems hard. In the following theorem we establish $\mathcal{L}(KG_{2,n})$. The result makes use of the Catalan numbers, denoted by $C_n$ (e.g., \cite[Theorem 1.4.1]{St}) and provides the first combinatorial interpretation to \seqnum{A052712}.

\begin{theorem}\label{thm;1}
We have $\mathcal{L}(KG_{2,n})=2^{n-1}(n+1)!C_n$.
\end{theorem}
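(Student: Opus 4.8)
The plan is to first reduce the problem to a purely combinatorial count, and then to factor that count into mutually independent pieces. I would begin by establishing the following characterization, valid for every graph: a labeling $\sigma$ is a random walk labeling of $G$ if and only if, for each $k$, the vertex set $\sigma^{-1}(\{1,\ldots,k\})$ induces a connected subgraph. The forward direction is immediate, since at the moment a new vertex receives its label it is adjacent to the current (already labeled) vertex; the converse holds because, the labeled set being connected at every stage, one may always walk through it from the last labeled vertex to a neighbor of the next vertex to be labeled. Thus $\mathcal{L}(G)$ equals the number of orderings $v_1,\ldots,v_{|V|}$ of $V$ whose every prefix induces a connected subgraph, and one checks that this recovers the three values in Example~\ref{ex;110}.

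Next I would analyze the connected induced subgraphs of $KG_{2,n}$. Writing the vertices as $(i,j)$ with $i\in\{1,2\}$ and $j\in[n]$, any two vertices in adjacent columns are adjacent, whereas vertices in non-adjacent columns are not. Hence a vertex set induces a connected subgraph precisely when the columns it meets form a contiguous interval, each met column contributing one or two vertices. Consequently, along any connected ordering the set of occupied columns is always an interval, and each step is of exactly one of two types: an \emph{opening} step, which adds the first vertex of a new column at the left or right end of the current interval, or a \emph{completing} step, which adds the second vertex of an already-occupied column.

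The heart of the argument is to encode a connected ordering by three independent data and to count each. First, recording for each time step whether it opens or completes a column, and pairing each completing step with the opening step of the same column, yields a perfect matching of the $2n$ time steps into $n$ pairs; conversely any perfect matching arises this way, since declaring the smaller element of each pair to be an opening automatically makes every prefix contain at least as many openings as completions, so each completing step has an available column. This contributes $(2n-1)!!$. Second, the sequence of physical columns visited by the opening steps is an interval growth determined by a length-$(n-1)$ word of left/right choices, contributing $2^{n-1}$ (the starting column being forced by the number of left choices). Third, for each column one chooses which of its two vertices is added first, contributing $2^{n}$. After verifying that these choices are independent and together reconstruct a unique connected ordering, I obtain
$$\mathcal{L}(KG_{2,n}) = (2n-1)!!\cdot 2^{n-1}\cdot 2^{n}.$$
A short manipulation using $(2n-1)!!=(2n)!/(2^{n}n!)$ and $C_n=\binom{2n}{n}/(n+1)$ then rewrites the right-hand side as $2^{n-1}(n+1)!\,C_n$, as claimed.

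The step I expect to require the most care is the independence claim: I must check that every perfect matching (equivalently, every legal open/complete pattern) is compatible with every left/right interval-growth word and every top/bottom assignment, so that no constraint couples the three factors, and that the resulting map to connected orderings is a genuine bijection. In particular I would confirm that every length-$(n-1)$ left/right word keeps the growing interval inside $[1,n]$, and that an opening step is legal exactly when it extends the current interval.
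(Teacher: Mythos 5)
Your proof is correct, and it takes a genuinely different route from the paper's. You first reduce to counting prefix-connected orderings (a characterization the paper never states explicitly), and then factor each such ordering of $KG_{2,n}$ into three independent choices: a perfect matching of the $2n$ time steps pairing each column's opening step with its completing step, a left/right word recording how the interval of occupied columns grows, and a top/bottom choice per column, giving $(2n-1)!!\cdot 2^{n-1}\cdot 2^{n}=2^{n-1}(2n)!/n!=2^{n-1}(n+1)!\,C_n$ in one stroke. The decisive structural fact, which you rightly flag as the point needing care, does hold: a completing step may fill the missing vertex of \emph{any} currently open column, since that vertex is adjacent to its column-mate and the set of occupied columns is unchanged; hence nothing couples the matching to the geometric data, and your observation that the interval stays inside $[1,n]$ once the starting column is set to one plus the number of left steps closes the bijection. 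The paper instead argues recursively: it first shows by induction that the number $t_n$ of labelings starting in the first column satisfies $t_{n+1}=2(2n+1)t_n$, so $t_n=(2n)!/n!$, then splits an arbitrary labeling at its starting column $k$ into labelings of two sub-king's-graphs $KG_{2,k-1}$ and $KG_{2,n-k}$, interleaves their label sets via $\binom{2(n-1)}{2(k-1)}$, and sums with $\sum_{k}\binom{n-1}{k-1}=2^{n-1}$. Your bijection is more self-contained and explains the clean product form of the answer, but it leans heavily on the king-move diagonals: for the grid graph $GG_{2,n}$ a new column can only be opened from a vertex in the same row, so the timing and row choices become coupled and your factorization breaks down, whereas the paper's split-and-interleave scheme is precisely the template it reuses (with case analysis) to prove Theorem \ref{thm;521}.
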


\begin{proof}
We begin by calculating the number of random walk labelings of $KG_{2,n}$ that start on the first column, denoted by $t_n$. We claim that $t_n=\frac{(2n)!}{n!}$ (\seqnum{A001813}). Indeed, it is clear that $t_1=2$ and, proceeding inductively, we consider a labeling of $KG_{2,n+1}$ that starts on the first column. There are two rows in which the label $1$ can be placed. Now, in any case, the two rows of the second column are reachable by the random walk and the subgraph of $KG_{2,n+1}$ consisting of the $n$ columns $2,3,\ldots,n+1$ is isomorphic to $KG_{2,n}$ and its labeling starts on its first column (the second column of $KG_{2,n+1}$). The unlabeled vertex in the first column of $KG_{2,n+1}$ can be labeled with any of the $2n+1$ labels $2,3,\ldots,2(n+1)$. Thus,
$$t_{n+1}=2(2n+1)t_n=2(2n+1)\frac{(2n)!}{n!}=\frac{(2n+2)!}{(n+1)!}.$$

Consider now an arbitrary random walk labeling of $KG_{2,n}$, where $n>1$. It may start at any of the $n$ columns and at either the first or the second row. As in the proof of the second result in Example \ref{ex;110}, if the labeling starts in the $k$th column, where $k\in[n]$, then the graph $KG_{2,n}$ is splitted into two graphs isomorphic to $KG_{2,k-1}$ and $KG_{2,n-k}$. The restriction of the labeling of $KG_{2,n}$ to these two subgraphs gives two labelings that start in the $k-1$th and $n-k$th column, respectively, which may be considered as the first columns of each of them. Finally, we may label the vertex vertically adjacent to the label $1$ with any of the $2n-1$ labels $2,3,\ldots,2n$. It follows that
\begin{align}
\mathcal{L}(KG_{2,n})&=2(2n-1)\sum_{k=1}^n\binom{2(n-1)}{2(k-1)}t_{k-1}t_{n-k}\nonumber\\&=2(2n-1)\sum_{k=1}^n\binom{2(n-1)}{2(k-1)}\frac{(2(k-1))!}{(k-1)!}\frac{(2(n-k))!}{(n-k)!}\nonumber\\ &=2\frac{(2(n-1))!}{(n-1)!}(2n-1)\sum_{k=1}^{n}\binom{n-1}{k-1}\nonumber\\
	&=2^{n-1}(n+1)!C_n\nonumber.\qedhere
\end{align}
\end{proof}

\subsection{The grid graph}
The \emph{grid graph} of size $m\times n$ is defined to be the graph whose vertex set $V$ is given by $$V=\left\{(i,j)\;:\;i\in[m],j\in[n]\right\}$$ and two vertices $(i_1,j_1),(i_2,j_2)\in V$ are adjacent if and only if $|i_1-i_2|+|j_1-j_2|=1$ (cf.~\cite[p.~194]{H}). We denote by $GG_{m,n}$ the grid graph of size $m\times n$. As in the case of
$\mathcal{L}(KG_{m,n})$, calculating $\mathcal{L}(GG_{m,n})$ for arbitrary $m$ and $n$ seems hard. In the following theorem we establish $\mathcal{L}(GG_{2,n})$.

\begin{theorem}\label{thm;521}
We have \begin{equation}\label{eq;612}\mathcal{L}(GG_{2,n})=2^{n-1}(n-1)!\sum_{k=0}^{n-1}\frac{n\binom{2(n-1)}{2k}+\binom{2n-1}{2k}}{\binom{n-1}{k}}.\end{equation}
Thus, asymptotically,  
$$\mathcal{L}(GG_{2,n})\sim \sqrt{\pi n}n!2^{2n-3}.$$
\end{theorem}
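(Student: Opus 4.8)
The plan is to follow the same structural recursion used in the King's graph case, adapted to the grid graph $GG_{2,n}$. First I would set up an auxiliary count: let $s_n$ denote the number of random walk labelings of $GG_{2,n}$ that begin in the first column. Unlike the King's graph, in the grid graph a vertex $(i,j)$ is adjacent only to its horizontal and vertical neighbors, so the two vertices in a column are \emph{not} adjacent to the two vertices in the next column simultaneously; the walk can only move between columns through the matching rows. I would work out a recurrence for $s_n$ by conditioning on which row the label $1$ occupies and tracking when the remaining vertex of the first column gets labeled. The key subtlety is that the vertical edge within a column is available, so after placing $1$ in, say, the top-left vertex, the walk may immediately label the bottom-left vertex, or it may first wander into column $2$ and return later; this gives a genuinely different (and more intricate) recurrence than the clean $t_{n+1}=2(2n+1)t_n$ of the King's graph.

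Next, mirroring the decomposition in the proof of Theorem \ref{thm;1}, I would express $\mathcal{L}(GG_{2,n})$ by summing over the starting column $k$ and the starting row, splitting the grid into a left block isomorphic to $GG_{2,k-1}$ and a right block isomorphic to $GG_{2,n-k}$, with an interleaving factor of the form $\binom{2(n-1)}{2(k-1)}$ counting how the two independent labeling sequences are merged. The vertex vertically adjacent to the starting vertex again contributes a free choice of label, producing a factor analogous to the $2(2n-1)$ seen before. Assembling these pieces should yield a convolution identity for $\mathcal{L}(GG_{2,n})$ in terms of the $s_k$, which after substituting the closed form for $s_k$ collapses into the stated sum $\sum_{k=0}^{n-1}\bigl(n\binom{2(n-1)}{2k}+\binom{2n-1}{2k}\bigr)/\binom{n-1}{k}$, with the two terms in the numerator reflecting the two distinct ways the starting column interacts with its neighbor.

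For the asymptotic statement, I would extract the dominant contribution from the inverse-binomial sum. The quantity $\sum_{k=0}^{n-1}\binom{2(n-1)}{2k}/\binom{n-1}{k}$ and its companion $\sum_{k=0}^{n-1}\binom{2n-1}{2k}/\binom{n-1}{k}$ are each dominated by their central terms near $k\approx n/2$, and I would estimate them either by a Laplace-type analysis of the summand's ratio or by relating the sum to the integral representation $\binom{n-1}{k}^{-1}=(n)\int_0^1 x^{k}(1-x)^{n-1-k}\,dx$ (up to constants). Applying Stirling's formula to the resulting central binomial coefficients and combining with the prefactor $2^{n-1}(n-1)!$ should produce the leading term $\sqrt{\pi n}\,n!\,2^{2n-3}$.

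I expect the main obstacle to be the derivation of the closed form for $s_n$ and, relatedly, verifying that the convolution sum telescopes exactly into the compact expression \eqref{eq;612} rather than a messier double sum. The grid graph's sparser adjacency means one must carefully account for the ordering constraints imposed by vertical edges within columns, and proving that these constraints contribute precisely the two numerator terms $n\binom{2(n-1)}{2k}$ and $\binom{2n-1}{2k}$ — rather than some other combination — is the delicate combinatorial heart of the argument. The asymptotic step is comparatively routine once the exact formula is in hand, being a standard dominant-term extraction via Stirling's approximation.
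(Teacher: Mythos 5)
Your reduction hinges on the claim that the vertex vertically adjacent to the starting vertex ``again contributes a free choice of label, producing a factor analogous to the $2(2n-1)$ seen before.'' That is precisely what fails for $GG_{2,n}$, and it is why the grid case cannot be a one-convolution adaptation of Theorem \ref{thm;1}. In the King's graph the start $(1,k)$ is diagonally adjacent to both vertices of columns $k-1$ and $k+1$, so each side block can be entered at either of its rows no matter when $(2,k)$ is labeled; hence the label of $(2,k)$ is genuinely free. In the grid graph the only neighbors of $(1,k)$ are $(1,k\pm 1)$ and $(2,k)$, so a side block can be entered in its \emph{lower} row only after $(2,k)$ has been labeled: the label of $(2,k)$ is entangled with how each block's labeling starts. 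The paper's proof accordingly splits into three cases: (a) label $2$ sits at $(2,k)$, after which both blocks may start at either row; (b) both blocks start in the upper row --- only here is the label of $(2,k)$ free, and the factor is $2(n-1)$, not $2(2n-1)$; and (c) one block starts in the lower row, which forces $(2,k)$ to be labeled before that block begins and yields a constrained double sum over the label $\ell$ of $(2,k)$, collapsed via the Hockey-stick identity to $\binom{2(n-1)}{2(k-1)-1}$. The two numerator terms $n\binom{2(n-1)}{2k}$ and $\binom{2n-1}{2k}$ in \eqref{eq;612} arise exactly from summing these cases and applying Pascal's rule; a single convolution with a free-label factor would miscount the case (c) configurations and cannot produce them. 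You also leave the auxiliary count undetermined: the paper's $t_n$ (labelings starting at the upper-left \emph{vertex}, not the first column) satisfies the clean recurrence $t_{n+1}=2(n+1)t_n$, giving $t_n=2^{n-1}n!$, and even this requires its own two-case argument (label $2$ below the start versus to its right).

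On the asymptotics, your Laplace-type plan is viable and is genuinely different from the paper: the summand $\binom{2(n-1)}{2k}/\binom{n-1}{k}$ has consecutive-term ratio $\frac{2(n-1)-2k-1}{2k+1}$, so it peaks centrally, and a Gaussian-sum estimate plus Stirling gives $\sum_k \binom{2(n-1)}{2k}/\binom{n-1}{k}\sim 2^{n-2}\sqrt{\pi n}$, with the $\binom{2n-1}{2k}$ sum a factor of order $n$ smaller; this recovers $\sqrt{\pi n}\,n!\,2^{2n-3}$. The paper instead reads the asymptotics off the exponential generating function (computed in the proof of Theorem \ref{thm;880}) by singularity analysis at $x=1/4$. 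But since the derivation of the exact formula is where your argument has the gap, the asymptotic step currently has nothing rigorous to stand on.
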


\begin{proof}
We begin by calculating the number of random walk labelings of $GG_{2,n}$, that start on the upper left vertex, denoted by $t_n$. We claim that $t_n=2^{n-1}n!$ (\seqnum{A002866}). Indeed, it is clear that $t_1=1$ and, proceeding inductively, we consider a labeling of $GG_{2,n+1}$ that starts on the upper left vertex. The remaining $n$ columns are isomorphic to $GG_{2,n}$. There are two vertices that can be labeled with $2$, namely, directly to the right or below label $1$. In the latter case, the walk can proceed in either the first or the second row of the second column, and the restriction of the labeling of $GG_{2,n+1}$ to $GG_{2,n}$ corresponds to a labeling of the latter that starts on its leftmost column. Thus, in this case, there are $2t_n$ labelings. In the former case, the labeling of $GG_{2,n}$ starts in the upper left vertex, and the vertex below the label $1$ can be labeled with any of the $2n$ remaining labels $3,4,\ldots,2(n+1)$. Thus, in this case, there are $2nt_n$ labelings. Combining the two cases, we have
$$t_{n+1}=2t_n+2nt_n=2(n+1)t_n=2(n+1)2^{n-1}n!=2^{n}(n+1)!.$$

Consider now an arbitrary random walk labeling of $GG_{2,n}$, where $n>1$. Suppose that it starts in the $k$th column, where $k\in[n]$.
We proceed now as in the proof of Theorem \ref{thm;1}, but we
distinguish between three cases, where we assume throughout that the label $1$ is in the upper row. Due to symmetry, the result will be multiplied in the end by $2$. It will be convenient to set $t_0:=1$.
\begin{enumerate}
\item [(a)] The label $2$ is also in the $k$th column (see Figure \ref{fig:10} below):
\begin{figure}[H]
\centering
\begin{tikzpicture}
\matrix[mystyle]{
$1$& $2$& $\cdots$&$k$&$\cdots$ & &&$\cdots$&$n$\\
& & &$1$& & &&&\\
& & &$2$& & &&&\\};
\end{tikzpicture}\caption{The label $2$ is also in the $k$th column.}\label{fig:10}
\end{figure}
In this case, the labeling of $GG_{2,k-1}$ begins in one of the two vertices of column $k-1$. The number of such labelings, for $k\geq 2$, is given by $2t_{k-1}$. Similarly, there are $2t_{n-k}$ labelings of $GG_{2,n-k}$, provided that $k\leq n-1$. There are $\binom{2(n-1)}{2(k-1)}$ different ways to distribute the remaining $2(n-1)$ labels $3,4,\ldots,2n$ between the two grid subgraphs. We conclude that, in this case, the number of labelings is given by
\begin{align}
&\overbrace{2t_{n-1}}^{k=1}+\sum_{k=2}^{n-1}\binom{2(n-1)}{2(k-1)}2t_{k-1}2t_{n-k}+\overbrace{2t_{n-1}}^{k=n}=\nonumber\\&\sum_{k=2}^{n-1}\binom{2(n-1)}{2(k-1)}2t_{k-1}2t_{n-k}+4t_{n-1}=\nonumber\\&2^{n-1}\sum_{k=2}^{n-1}\binom{2(n-1)}{2(k-1)}(k-1)!(n-k)!+2^n(n-1)!.\label{eq;a4}
\end{align}

\item [(b)]\label{bb} The labelings of the two grid subgraphs start in the upper row (see Figure \ref{fig:20} below for an example): \begin{figure}[H]
\centering
\begin{tikzpicture}
\matrix[mystyle]{
$1$& $2$& $\cdots$&$k$&$\cdots$ & &&$\cdots$&$n$\\
& &$5$ &$1$& $2$& &&&\\
& & && $3$&$4$ &&&\\};
\end{tikzpicture}\caption{The labelings of the two grid subgraphs start in the upper row.}\label{fig:20}
\end{figure}
Here, for every $1\leq k\leq n$, there are $t_{k-1}$ and $t_{n-k}$ labelings of the two grid subgraphs. Additionally, there are $2(n-1)$ possible labels for the vertex below the label $1$, namely $2,3,\ldots,2n$. Suppose it is the label $i$. Then there are $\binom{2(n-1)}{2(k-1)}$ different ways to distribute the remaining $2(n-1)$ labels $\{2,3,4,\ldots,2n\}\setminus\{i\}$ between the two grid subgraphs. We conclude that, in this case, the number of labelings is given by
\begin{align}
&2(n-1)\sum_{k=1}^n\binom{2(n-1)}{2(k-1)}t_{k-1}t_{n-k}=\nonumber\\&2^{n-2}(n-1)\sum_{k=2}^{n-1}\binom{2(n-1)}{2(k-1)}(k-1)!(n-k)!+2^n(n-1)(n-1)!\label{eq;a3}.\end{align}

\item [(c)] The label $2$ is in the upper row and the labeling of the other grid subgraph starts in the lower row (see Figure \ref{fig:30} below for an example):
\begin{figure}[H]
\centering
\begin{tikzpicture}
\matrix[mystyle]{
$1$& $2$& $\cdots$&$k$&$\cdots$ & &&$\cdots$&$n$\\
& &$2$ &$1$& & &&&\\
 &$4$ &$3$&$5$& $6$& &&&\\};
\end{tikzpicture}\caption{The label $2$ is in the upper row and the labeling of the other grid subgraph starts in the lower row.}\label{fig:30}
\end{figure}
We notice that the cases $k=1$ and $k=n$ are covered by part (\hyperref[bb]{b}). We may assume that the label $2$ is on the left of label $1$. Due to symmetry, the result will be multiplied by $2$. As before, there are $t_{k-1}$  and $t_{n-k}$ labelings of the two grid subgraphs. The labeling of the right grid subgraph cannot begin before the vertex beneath the label $1$ is labeled. Suppose that this vertex is labeled with $3\leq \ell\leq 2(k-1)+2$. Then there are $\binom{2n -\ell}{2(k-1)-(\ell-2)}$ different ways to distribute the remaining $2n-\ell$ labels $\{\ell+1,\ell+2,\ldots,2n\}$ between the two grid subgraphs. We conclude that, in this case, the number of labelings is given by
\begin{align}
&2\sum_{k=2}^{n-1}\sum_{\ell=3}^{2(k-1)+2}\binom{2n-\ell}{2(k-1)-(\ell-2)}t_{k-1}t_{n-k}=\nonumber\\&2^{n-2}\sum_{k=2}^{n-1}\sum_{\ell=3}^{2(k-1)+2}\binom{2n-\ell}{2(k-1)-(\ell-2)}(k-1)!(n-k)!.\label{eq;a1}
\end{align}
In order to simplify, let $2\leq k\leq n-1$. Then
\begin{align}
\sum_{\ell=3}^{2(k-1)+2}\binom{2n-\ell}{2(k-1)-(\ell-2)}&=\sum_{\ell=0}^{2(k-1)-1}\binom{2(n-1)-1-\ell}{2(k-1)-1-\ell}\nonumber\\&=\binom{2(n-1)}{2(k-1)-1}, \nonumber
\end{align} where the last equality is due to the Hockey-stick identity (e.g., \cite[(1) on p.~7]{R}). Thus, \begin{equation}\label{eq;a2}
(\ref{eq;a1}) = 2^{n-2}\sum_{k=2}^{n-1}\binom{2(n-1)}{2(k-1)-1}(k-1)!(n-k)!.
\end{equation}

\end{enumerate}
We may now add the three results and obtain
\begin{align}
&\mathcal{L}(GG_{2,n})\nonumber\\ &= 2((\ref{eq;a4})+(\ref{eq;a3})+(\ref{eq;a2}))\nonumber\\&=2^{n+1}n!+2^{n-1}\sum_{k=2}^{n-1}\left((n+1)\binom{2(n-1)}{2(k-1)}+\binom{2(n-1)}{2(k-1)-1}\right)(k-1)!(n-k)!\nonumber\\&=2^{n+1}n!+2^{n-1}\sum_{k=2}^{n-1}\left(n\binom{2(n-1)}{2(k-1)}+\binom{2n-1}{2(k-1)}\right)(k-1)!(n-k)!\nonumber\\&=2^{n-1}\sum_{k=1}^n\left(n\binom{2(n-1)}{2(k-1)}+\binom{2n-1}{2(k-1)}\right)(k-1)!(n-k)!\nonumber\\&=2^{n-1}(n-1)!\sum_{k=0}^{n-1}\frac{n\binom{2(n-1)}{2k}+\binom{2n-1}{2k}}{\binom{n-1}{k}},\nonumber
\end{align}
where in the third equality we used Pascal's rule.

We now wish to prove the claim regarding the asymptotic of $\mathcal{L}(GG_{2,n})$. In the proof of Theorem \ref{thm;880} we shall establish that the egf $A(x)$ of $\mathcal{L}(GG_{2,n})$ is given by
$$
A(x) = \frac{(1-2x)^{2}\arctan\left(\frac{2x}{\sqrt{1-4x}}\right)+2x\sqrt{1-4x}}{2\left(\sqrt{1-4x}\right)^{3}}.$$ By using the singularity analysis (see \cite[Chapter VI]{F}) and that the Laurent series of $A(x)$ at $x=1/4$ is given by
$$A(x)=\frac{\pi}{16(1-4x)^{3/2}}+O\left(\frac{1}{1-4x}\right),$$
we have 
%%function $(1-2x)^{2}\arctan\left(\frac{2x}{\sqrt{1-4x}}\right)$ is analytic in the open disc $|x|<1/4$. By a theorem of Darboux (e.g., \cite[Theorem 5.3.1]{W}), there exists a constant $c$ such that
\begin{align*}
\mathcal{L}(GG_{2,n})&\sim \sqrt{\pi n}n!2^{2n-3}.
\end{align*}
\end{proof}

\subsection{Some combinatorial identities}

Recall that, by Theorem
\ref{thm;521}, we have $$\mathcal{L}(GG_{2,n})=2^{n-1}(n-1)!\sum_{k=0}^{n-1}\frac{n\binom{2(n-1)}{2k}+\binom{2n-1}{2k}}{\binom{n-1}{k}}.$$
In this section, we analyze this expression, which involves the inverses of the binomial coefficients. A standard tool in this setting was introduced by Trif \cite{Tr}. It is based on Euler's Beta function $B(a,b)$ that, for $a,b>0$, is defined by $$B(a,b)=\int_0^1t^{a-1}(1-t)^{b-1}dt$$
and on the fact that, for $1\leq k\leq n$, we have $$\binom{n}{k}^{-1}=(n+1)B(k+1,n-k+1)=kB(k,n-k+1).$$
%Since $$B(a,b)=\frac{\Gamma(a)\Gamma(b)}{\Gamma(a+b)}$$ (e.g., \cite[(2.13)]{A}), for $m,n\in\N$, we have $$B(m,n) = \frac{(m-1)!(n-1)!}{(m+n-1)!}.$$
In our calculations, we were aided by Maple and whenever we introduce generating functions, we tacitly assume that $x$ is a positive small number, say, $0<x<0.01$.

Our first result is concerned with the expression for $\mathcal{L}(GG_{2,n})$ as a whole. It turns out that $\mathcal{L}(GG_{2,n})$ coincides with \seqnum{A087923}, which counts the number of ways of arranging the numbers $1,\ldots,2n$ into a $2\times n$ array, such that there is exactly one local maximum. G\"obel stated recently that $$\seqnum{A087923}(n)=2\sum_{k=1}^n \frac{(2n-2)!(2k(n-k+1)-1)}{(2k-1)!!(2n-2k-1)!!}.$$ Using the fact that, for odd $m=2k-1$ with $k\in\N$, we have $m!! = \frac{(2k)!}{2^kk!}$, we may write $$\seqnum{A087923}(n)=2^{n}(n-1)!\sum_{k=0}^{n-1}\frac{\binom{2(n-1)}{2k}(2(k+1)(n-k)-1)}{\binom{n-1}{k}(2k+1)}.$$

\begin{theorem}\label{thm;880}
We have \begin{equation}\label{eq;915}
2^{n-1}(n-1)!\sum_{k=0}^{n-1}\frac{n\binom{2(n-1)}{2k}+\binom{2n-1}{2k}}{\binom{n-1}{k}}=2^n(n-1)!\sum_{k=0}^{n-1}\frac{\binom{2(n-1)}{2k}(2(k+1)(n-k)-1)}{\binom{n-1}{k}(2k+1)}.
\end{equation}
Thus, $\mathcal{L}(GG_{2,n})=\seqnum{A087923}(n)$.
\end{theorem}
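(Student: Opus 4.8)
The plan is to prove the identity \eqref{eq;915} by computing the exponential generating functions of its two sides and showing they coincide; this simultaneously yields the closed form of $A(x)$ promised in the proof of Theorem \ref{thm;521}. First I would cancel the common factor $2^{n-1}(n-1)!$ and reduce \eqref{eq;915} to the equivalent statement
$$\sum_{k=0}^{n-1}\frac{n\binom{2(n-1)}{2k}+\binom{2n-1}{2k}}{\binom{n-1}{k}}=2\sum_{k=0}^{n-1}\frac{\binom{2(n-1)}{2k}\bigl(2(k+1)(n-k)-1\bigr)}{\binom{n-1}{k}(2k+1)}.$$
Next I would remove the awkward factors on each side using the two elementary identities $\binom{2n-1}{2k}=\frac{2n-1}{2n-1-2k}\binom{2(n-1)}{2k}$ and $\frac{1}{2k+1}\binom{2(n-1)}{2k}=\frac{1}{2n-1}\binom{2n-1}{2k+1}$, so that every summand on both sides becomes a rational multiple of $\binom{2(n-1)}{2k}/\binom{n-1}{k}$ (on the right, after expanding the polynomial weight $2(k+1)(n-k)-1$, of a few such terms with shifted upper and lower indices).

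The second step is Trif's device: substituting $\binom{n-1}{k}^{-1}=n\int_0^1 t^{k}(1-t)^{n-1-k}\,dt$ turns each sum into $n\int_0^1(\cdots)\,dt$, where the bracketed inner sums are finite and can be evaluated in closed form by the even- and odd-index binomial theorems $\sum_k\binom{N}{2k}y^{2k}=\tfrac12\bigl((1+y)^N+(1-y)^N\bigr)$ and $\sum_k\binom{N}{2k+1}y^{2k+1}=\tfrac12\bigl((1+y)^N-(1-y)^N\bigr)$. Setting $w=\sqrt{t(1-t)}$ and $r=\sqrt{t/(1-t)}$ and using the relation $(1-t)(1\pm r)^2=1\pm 2w$, the inner sums collapse to elementary expressions in $(1\pm 2w)^{n-1}$; for instance the first summand on the left contributes $\tfrac12\bigl((1+2w)^{n-1}+(1-2w)^{n-1}\bigr)$.

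Finally I would assemble the generating function. Writing $A(x)=\sum_{n\ge1}\mathcal{L}(GG_{2,n})x^{n}/n!$, the prefactor $2^{n-1}(n-1)!$ cancels the $n!$ up to $2^{n-1}/n$, and after interchanging summation and integration each resulting $n$-series is a geometric series or its derivative, summing to rational functions of $x$ with denominators $1-2x(1\pm2w)$ and $\bigl(1-2x(1\pm2w)\bigr)^2$. The substitution $t=\sin^2\theta$ (so $dt=\sin2\theta\,d\theta$, $2w=\sin2\theta$, $r=\tan\theta$) reduces everything to an elementary trigonometric integral; completing the square produces the discriminant $(1-2x)^2-(2x)^2=1-4x$, and the $\theta$-integration yields the term $\arctan\!\bigl(2x/\sqrt{1-4x}\bigr)$ together with the algebraic part, giving exactly the stated $A(x)$. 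Running the identical computation on the reduced right-hand side produces the same $A(x)$, and comparing coefficients of $x^{n}/n!$ proves \eqref{eq;915}. I expect the main obstacle to be the right-hand side: the polynomial weight $2(k+1)(n-k)-1$ together with the $1/(2k+1)$ spawns several binomial sums with shifted indices (through $\binom{2n-1}{2k+1}$), and showing that their combined generating function collapses to the single closed form $A(x)$ — correctly matching both the $\arctan$ piece and the algebraic piece — is the delicate bookkeeping at the heart of the argument.
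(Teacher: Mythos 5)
Your proposal is correct and follows essentially the same route as the paper: both reduce the inverse binomial coefficients via Trif's Beta-function device $\binom{n-1}{k}^{-1}=n\int_0^1 t^k(1-t)^{n-1-k}\,dt$, compute the exponential generating functions of the two sides of \eqref{eq;915}, and show that both collapse to the same closed form $\frac{(1-2x)^{2}\arctan\left(2x/\sqrt{1-4x}\right)+2x\sqrt{1-4x}}{2\left(\sqrt{1-4x}\right)^{3}}$, whence the coefficients agree. The only difference is one of execution: you evaluate the inner sums and the $t$-integral by hand (even/odd binomial theorem, geometric series, the substitution $t=\sin^2\theta$), whereas the paper interchanges the order of summation and delegates the resulting double sum and integral to Maple.
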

\begin{proof} Denote the left- (resp.\ right) hand side of (\ref{eq;915}) by  $a_n$ (resp.\ $b_n$) and let $A(x)$ and $B(x)$ be their corresponding egfs, respectively.
We have
\begin{align*}
&A(x)\nonumber\\&=\sum_{n\geq1}\frac{a_n}{n!}x^n=\sum_{n\geq1}\sum_{k=0}^{n-1}\int_0^12^{n-1}\left(n\binom{2(n-1)}{2k}+\binom{2n-1}{2k}\right)t^k(1-t)^{n-1-k}x^ndt\\ &=\int_0^1\sum_{n\geq1}\sum_{k\geq0}2^{n+k-1}\left((n+k)\binom{2(n+k-1)}{2k}+\binom{2n+2k-1}{2k}\right)t^k(1-t)^{n-1}x^{n+k}dt\\
&=\int_0^1\frac{2x(4\left(8t^{3}-12t^{2}+6t-1\right)x^{3}-8\left(t-1\right)x^{2}+2tx-5x+1)}{(16t\left(t-1\right)x^{2}+(1-2x)^{2})^{2}}dt.
\end{align*}
Similarly,
\begin{align*}
&B(x)\nonumber\\&=\sum_{n\geq1}\frac{b_n}{n!}x^n=\sum_{n\geq1}\sum_{k=0}^{n-1}\int_0^1\frac{2^n\binom{2(n-1)}{2k}(2(k+1)(n-k)-1)}{2k+1}t^k(1-t)^{n-1-k}x^ndt\\
&=\int_0^1\sum_{n\geq1}\sum_{k\geq0}\frac{2^{n+k}\binom{2(n+k-1)}{2k}(2n(k+1)-1)}{2k+1}t^k(1-t)^{n-1}x^{n+k}dt\\
&=-\int_0^1\frac{2x(1-2x)^{2}(4tx-2x-1)}{(16t\left(t-1\right)x^{2}+(1-2x)^{2})^{2}}dt.
\end{align*}
Thus,
$$A(x)=\frac{(1-2x)^{2}\arctan\left(\frac{2x}{\sqrt{1-4x}}\right)+2x\sqrt{1-4x}}{2\left(\sqrt{1-4x}\right)^{3}}=B(x)$$ and therefore
$a_n=b_n$.
\end{proof}

The following two results are concerned with the second summand in the expression for $\mathcal{L}(GG_{2,n})$. In Theorem \ref{thm;434} we prove that $(n-1)!\sum_{k=0}^{n-1}\binom{2n-1}{2k}\binom{n-1}{k}^{-1}$ coincides with \seqnum{A087547}, that is defined by
\begin{equation}\label{eq;881}
\seqnum{A087547}(n)=n!2^{n+1}\int_{0}^{1}\frac{1}{\left(1+x^{2}\right)^{n+1}}dx-\frac{\pi(2n)!}{2^{n+1}n!}.
\end{equation}
Bala stated that \begin{equation}\label{eq;900}
\seqnum{A087547}(n)= \frac{(2n)!}{n!2^n}\sum_{k = 0}^{n-1} \frac{2^k(k!)^2}{(2k+1)!}
\end{equation} and conjectured that \begin{equation}\label{eq;901}
\seqnum{A087547}(n)=\sum_{k=1}^{n}\frac{2^{k-1}(k-1)!(n+k-1)!}{(2k-1)!}.
\end{equation}
In Theorem \ref{thm;001} we prove that Bala's conjecture is true.

\begin{theorem}\label{thm;434}
We have \begin{equation}\label{eq;771}
\seqnum{A087547}(n)=(n-1)!\sum_{k=0}^{n-1}\frac{\binom{2n-1}{2k}}{\binom{n-1}{k}}.
\end{equation}
\end{theorem}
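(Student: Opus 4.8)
The plan is to turn both sides of (\ref{eq;771}) into one and the same trigonometric integral. First I would attack the right-hand side with Trif's Beta-function tool, specialized to $\binom{n-1}{k}^{-1}=nB(k+1,n-k)=n\int_0^1 t^k(1-t)^{n-1-k}\,dt$ (valid for $0\le k\le n-1$). Interchanging the finite sum with the integral gives
$$(n-1)!\sum_{k=0}^{n-1}\frac{\binom{2n-1}{2k}}{\binom{n-1}{k}}=n!\int_0^1(1-t)^{n-1}\sum_{k=0}^{n-1}\binom{2n-1}{2k}\left(\frac{t}{1-t}\right)^{k}dt.$$
The inner sum is an even-part extraction: setting $y=t/(1-t)$ and reading off the even-index terms of $(1+z)^{2n-1}$ yields $\sum_{k=0}^{n-1}\binom{2n-1}{2k}y^{k}=\tfrac12\big((1+\sqrt y)^{2n-1}+(1-\sqrt y)^{2n-1}\big)$, where the upper limit $n-1$ already captures all even indices in $\{0,\dots,2n-1\}$. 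Substituting back and absorbing the powers of $1-t$ (note $(1-t)^{n-1}/(1-t)^{(2n-1)/2}=(1-t)^{-1/2}$) collapses the right-hand side to $\tfrac{n!}{2}\int_0^1\frac{(\sqrt{1-t}+\sqrt t)^{2n-1}+(\sqrt{1-t}-\sqrt t)^{2n-1}}{\sqrt{1-t}}\,dt$.

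Next I would substitute $t=\sin^2\theta$, which turns this into $n!\int_0^{\pi/2}\big[(\cos\theta+\sin\theta)^{2n-1}+(\cos\theta-\sin\theta)^{2n-1}\big]\sin\theta\,d\theta$. Writing $\cos\theta+\sin\theta=\sqrt2\cos(\theta-\pi/4)$ and $\cos\theta-\sin\theta=\sqrt2\cos(\theta+\pi/4)$, I would shift the integration variable in each of the two pieces. In the first piece the shift $\alpha=\theta-\pi/4$ produces $2^{n-1}\int_{-\pi/4}^{\pi/4}(\cos^{2n}\alpha+\cos^{2n-1}\alpha\sin\alpha)\,d\alpha$, whose odd summand drops by parity, leaving $2^n\int_0^{\pi/4}\cos^{2n}\alpha\,d\alpha$. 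In the second piece the shift $\beta=\theta+\pi/4$ gives a vanishing boundary term (since $\cos^{2n}(3\pi/4)=\cos^{2n}(\pi/4)$) together with $-2^n\int_0^{\pi/4}\sin^{2n}\gamma\,d\gamma$. Adding the two pieces should give exactly
$$(n-1)!\sum_{k=0}^{n-1}\frac{\binom{2n-1}{2k}}{\binom{n-1}{k}}=n!\,2^{n}\int_0^{\pi/4}\big(\cos^{2n}\phi-\sin^{2n}\phi\big)\,d\phi.$$

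Finally I would simplify the definition (\ref{eq;881}) to the same target. The substitution $x=\tan\phi$ gives $\int_0^1(1+x^2)^{-(n+1)}\,dx=\int_0^{\pi/4}\cos^{2n}\phi\,d\phi$, while the Wallis integral identifies the subtracted term as $\tfrac{\pi(2n)!}{2^{n+1}n!}=n!\,2^{n}\int_0^{\pi/2}\cos^{2n}\phi\,d\phi$. Using $\int_0^{\pi/2}\cos^{2n}\phi\,d\phi=\int_0^{\pi/4}(\cos^{2n}\phi+\sin^{2n}\phi)\,d\phi$ (via $\phi\mapsto\pi/2-\phi$ on $[\pi/4,\pi/2]$) then yields $\seqnum{A087547}(n)=n!\,2^{n}\int_0^{\pi/4}(\cos^{2n}\phi-\sin^{2n}\phi)\,d\phi$, which coincides with the expression obtained for the right-hand side, proving the claim. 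The main obstacle is the careful evaluation in the second step: one must justify the variable shifts, correctly discard the odd contributions, and verify that the boundary terms cancel, all of which hinge on the specific symmetry of $\cos^{2n}$ about $\pi/4$.
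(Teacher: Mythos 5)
Your proof is correct, and it takes a genuinely different route from the paper. The paper's proof is a generating-function argument: it applies the Beta-function trick to the right-hand side, sums the resulting double series into the closed form $\frac{x\left((1-x)\arctan\left(x/\sqrt{1-2x}\right)+\sqrt{1-2x}\right)}{(1-x)\left(\sqrt{1-2x}\right)^{3}}$ for $\sum_{n\geq1}b_nx^n/(n-1)!$, and then matches this against the generating function of the left-hand side, which it derives from the \emph{known} recursion $a_n=(2n-1)a_{n-1}+(n-1)!$, $a_1=1$, cited without proof. You instead evaluate both sides directly: Trif's identity plus even-index extraction and the substitution $t=\sin^2\theta$ reduce the right-hand side to $n!\,2^{n}\int_0^{\pi/4}\left(\cos^{2n}\phi-\sin^{2n}\phi\right)d\phi$ (your shifts, parity cancellation, and boundary-term check all hold, since $\cos^{2n}(3\pi/4)=\cos^{2n}(\pi/4)=2^{-n}$), while $x=\tan\phi$, the Wallis integral, and the reflection $\phi\mapsto\pi/2-\phi$ reduce the defining expression (\ref{eq;881}) to the same quantity. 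Your argument buys self-containedness: it needs nothing beyond the definition (\ref{eq;881}) stated in the paper, avoids the unproved external recursion, and avoids the computer-assisted summation of the double series; it is also very much in the spirit of the paper's own Lemma \ref{lem;52}, which converts such sums into trigonometric integrals. What the paper's route buys in exchange is the closed-form generating function itself as a by-product, and methodological uniformity with the neighboring proofs (Theorems \ref{thm;880} and \ref{thm;001}), which all follow the same sum-integral-interchange template.
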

\begin{proof}
Denote the left- (resp.\ right) hand side of (\ref{eq;771}) by  $a_n$ (resp.\ $b_n$). We have
\begin{align*}
\sum_{n\geq1}\frac{b_n}{(n-1)!}x^n&=\sum_{n\geq1}\sum_{k=0}^{n-1}\int_0^1n\binom{2n-1}{2k}t^k(1-t)^{n-1-k}x^ndt\\
&=\int_0^1\sum_{n\geq1}\sum_{k\geq0}(n+k)\binom{2n+2k-1}{2k}t^k(1-t)^{n-1}x^{n+k}dt\\
&=-\int_0^1\frac{x(4t^2x^2-4tx-(1-x)^2)}{(4t^2x^2-4tx^2+(1-x)^2)^2}dt\\
&=\frac{x\left((1-x)\arctan\left(\frac{x}{\sqrt{1-2x}}\right)+\sqrt{1-2x}\right)}{(1-x)\left(\sqrt{1-2x}\right)^3}.
\end{align*}
On the other hand, it is known that $(a_n)_{n\in\N}$ satisfies the recursion formula $a_n=(2n-1)a_{n-1}+(n-1)!$ with $a_1=1$. Thus,
$$\sum_{n\geq1}\frac{a_n}{(n-1)!}x^n=\frac{x\left((1-x)\arctan\left(\frac{x}{\sqrt{1-2x}}\right)+\sqrt{1-2x}\right)}{(1-x)\left(\sqrt{1-2x}\right)^3}.$$
Hence, $a_n=b_n$.
\end{proof}

\begin{theorem}\label{thm;001}
We have
\begin{equation}\label{eq;003}
\sum_{k=0}^{n-1}\frac{2^{k}}{(2k+1)\binom{2k}{k}}\binom{n+k}{k}=\frac{\binom{2n}{n}}{2^{n}}\sum_{k=0}^{n-1}\frac{2^{k}}{(2k+1)\binom{2k}{k}}.
\end{equation}
In particular, Bala's conjecture is true.
\end{theorem}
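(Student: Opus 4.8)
The plan is to show that the two sides of (\ref{eq;003}) satisfy one and the same first-order recurrence with the same initial value, so that they must agree for every $n$. Write $c_k=\frac{2^k}{(2k+1)\binom{2k}{k}}$ and record, via the Beta-function device used throughout this section, that $c_k=2^kB(k+1,k+1)=2^k\int_0^1\big(t(1-t)\big)^k\,dt$. Denote the right-hand side of (\ref{eq;003}) by $R_n=\frac{\binom{2n}{n}}{2^n}\sum_{k=0}^{n-1}c_k$. Using $\binom{2n+2}{n+1}=\frac{2(2n+1)}{n+1}\binom{2n}{n}$ together with $\sum_{k=0}^{n}c_k=\sum_{k=0}^{n-1}c_k+c_n$ and $c_n=\frac{2^n}{(2n+1)\binom{2n}{n}}$, a direct computation gives
\[
(n+1)R_{n+1}=(2n+1)R_n+1,\qquad R_1=1.
\]

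Next I would show that the left-hand side $L_n=\sum_{k=0}^{n-1}c_k\binom{n+k}{k}$ obeys the very same recurrence. From the absorption identity $(n+1)\binom{n+1+k}{k}=(n+k+1)\binom{n+k}{k}$ one gets $(n+1)L_{n+1}=\sum_{k=0}^{n}c_k(n+k+1)\binom{n+k}{k}$. Peeling off the term $k=n$, which equals $c_n(2n+1)\binom{2n}{n}=2^n$, and subtracting $(2n+1)L_n$ collapses the remaining range $0\le k\le n-1$ because $(n+k+1)-(2n+1)=k-n$; this yields
\[
(n+1)L_{n+1}-(2n+1)L_n=2^n-\sum_{k=0}^{n-1}c_k(n-k)\binom{n+k}{k}.
\]
Since $L_1=c_0=1=R_1$, the two sequences obey identical recurrences (and hence coincide) exactly when the auxiliary identity
\begin{equation}
\sum_{k=0}^{n-1}c_k(n-k)\binom{n+k}{k}=2^n-1 \tag{$\star$}
\end{equation}
holds for all $n$. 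Establishing $(\star)$ would therefore complete the proof.

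It remains to prove $(\star)$, and this is where the real work lies. I would argue at the level of ordinary generating functions, showing that $\sum_{n\ge1}\big(\sum_{k=0}^{n-1}c_k(n-k)\binom{n+k}{k}\big)x^n$ equals $\frac{x}{(1-x)(1-2x)}$, the generating function of $2^n-1$. Substituting $c_k=2^k\int_0^1(t(1-t))^k\,dt$ and interchanging summation and integration, I would reparametrise the index pair $(k,n)$ by $(k,j)$ with $j=n-k\ge1$, so that the restriction $k\le n-1$ disappears and the summand becomes $j\binom{2k+j}{k}\big(2t(1-t)\big)^kx^{k+j}$. The task then reduces to evaluating in closed form the diagonal-type double sum $\Phi(u,x)=\sum_{k,j\ge0}\binom{2k+j}{k}u^kx^j$, applying the weight $j$ through $x\,\partial_x$, and finally setting $u=2t(1-t)x$ and integrating over $t\in[0,1]$. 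Writing $\binom{2k+j}{k}=[z^k](1+z)^{2k+j}$ and summing the geometric series, a single residue computation (equivalently, Lagrange inversion applied to the small root of $z=u(1+z)^2$) gives
\[
\Phi(u,x)=\frac{1}{\sqrt{1-4u}}\cdot\frac{2u}{\,2u-x+x\sqrt{1-4u}\,}.
\]

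The hard part will be the resulting $t$-integral of $\big[x\,\partial_x\Phi(u,x)\big]$ at $u=2t(1-t)x$. My expectation is that the substitution $1-2t(1-t)=\tfrac12\big(1+(2t-1)^2\big)$ rationalises the square roots, and that the elementary identity $\frac{d}{ds}\frac{s}{(1+s^2)^{m}}=\frac{1-(2m-1)s^2}{(1+s^2)^{m+1}}$ exhibits the integrand as a perfect derivative, so that the arctangent contributions cancel and only the rational function $\frac{x}{(1-x)(1-2x)}$ survives. This last step is routine but lengthy, and is exactly of the Maple-assisted type carried out in the proofs of Theorems \ref{thm;880} and \ref{thm;434}. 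Once $(\star)$ is secured, the recurrence argument above gives $L_n=R_n$, proving (\ref{eq;003}). The particular consequence for Bala's conjecture then follows upon multiplying (\ref{eq;003}) by $n!$ and writing $c_k=\frac{2^k(k!)^2}{(2k+1)!}$: the left side becomes $\sum_{k=1}^{n}\frac{2^{k-1}(k-1)!(n+k-1)!}{(2k-1)!}$, while $r_n:=n!R_n$ satisfies $r_n=(2n-1)r_{n-1}+(n-1)!$ with $r_1=1$, the same recurrence and initial value as $\seqnum{A087547}(n)$ (see the proof of Theorem \ref{thm;434}), whence $r_n=\seqnum{A087547}(n)$ and (\ref{eq;901}) holds.
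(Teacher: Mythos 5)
Your proposal is correct, and it takes a genuinely different route from the paper's proof. The paper applies the Beta-function device to both sides of (\ref{eq;003}) simultaneously: it computes the ordinary generating functions of the two sides as explicit integrals and evaluates both in closed form as $\frac{\pi-4\arctan\left(\sqrt{1-2x}\right)}{2\sqrt{1-2x}}$, whence the coefficients agree. You instead show that the right-hand side satisfies $(n+1)R_{n+1}=(2n+1)R_n+1$, $R_1=1$, and that the left-hand side satisfies the same recurrence precisely when your auxiliary identity $(\star)$ holds; this concentrates all the transcendental work into a single simpler statement whose generating function is the \emph{rational} function $\frac{x}{(1-x)(1-2x)}$, and it also delivers Bala's conjecture directly through the recurrence $r_n=(2n-1)r_{n-1}+(n-1)!$ of \seqnum{A087547}, without invoking the egf computation of Theorem \ref{thm;434}. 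All of your elementary steps check out (the recurrence for $R_n$, the absorption identity, the value $c_n(2n+1)\binom{2n}{n}=2^n$, the identification of $n!L_n$ with (\ref{eq;901}), and the Lagrange-inversion formula for $\Phi(u,x)$). The one step you leave as an ``expectation'' does go through, and more easily than your sketch suggests: with $s=\sqrt{1-4u}$ one has $2u=\tfrac12(1-s)(1+s)$ and $2u-x+xs=\tfrac12(1-s)(1+s-2x)$, so your $\Phi$ collapses to
\begin{equation*}
\Phi(u,x)=\frac{1+s}{s(1+s-2x)},\qquad\text{hence}\qquad x\,\partial_x\Phi(u,x)=\frac{2x(1+s)}{s(1+s-2x)^{2}}.
\end{equation*}
Now put $u=2t(1-t)x$, substitute $v=2t-1$, and write $a=\sqrt{1-2x}$, $b=\sqrt{2x}$, so that $s=\sqrt{a^{2}+b^{2}v^{2}}$, $a^{2}+b^{2}=1$, and $1+s-2x=s+a^{2}$; the required integral becomes, by partial fractions in $s$,
\begin{equation*}
\int_{0}^{1}\frac{b^{2}(1+s)}{s\left(s+a^{2}\right)^{2}}\,dv
=\frac{b^{2}}{a^{4}}\left(I_{1}-I_{2}\right)-\frac{b^{4}}{a^{2}}\,I_{3},
\end{equation*}
where $I_{1},I_{2},I_{3}$ are the integrals over $v\in[0,1]$ of $\frac{1}{s}$, $\frac{1}{s+a^{2}}$, $\frac{1}{(s+a^{2})^{2}}$. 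Rationalizing via $s^{2}-a^{4}=b^{2}(v^{2}+a^{2})$ and using the primitives $\int\frac{dv}{s}=\frac{1}{b}\ln(bv+s)$ and $\int\frac{dv}{(v^{2}+a^{2})s}=\frac{1}{a^{3}}\arctan\frac{av}{s}$, one finds $I_{1}-I_{2}=\frac{a}{b^{2}}\left(\frac{\pi}{2}-2\arctan a\right)$ (the logarithms cancel) and $-\frac{b^{4}}{a^{2}}I_{3}=-\frac{1}{a^{3}}\left(\frac{\pi}{2}-2\arctan a\right)+\frac{b^{2}}{a^{2}(1+a^{2})}$, so the arctangents cancel as well, leaving exactly $\frac{b^{2}}{a^{2}(1+a^{2})}=\frac{x}{(1-x)(1-2x)}$, the generating function of $2^{n}-1$. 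With this computation written out, your proof of $(\star)$, and hence of (\ref{eq;003}) and of Bala's conjecture, is complete.
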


\begin{proof}
Writing
$$(\ref{eq;900}) = \binom{2n}{n}\frac{n!}{2^n}\sum_{k=0}^{n-1}\frac{2^{k}}{(2k+1)\binom{2k}{k}}$$
and
$$(\ref{eq;901})= \sum_{k=0}^{n-1}\frac{2^{k}k!(n+k)!}{(2k+1)!},$$
we see that proving the identity in (\ref{eq;003}) settles Bala's conjecture.

Let us denote the left- (resp.\ right) hand side of (\ref{eq;003}) by  $a_n$ (resp.\ $b_n$) and let $A(x)$ and $B(x)$ be their corresponding generating functions, respectively. We have
\begin{align*}
A(x)&=\sum_{n\geq1}a_nx^n=\sum_{n\geq1}\sum_{k=0}^{n-1}\int_0^12^k\binom{n+k}{k}t^k(1-t)^kx^ndt\\
&=\int_0^1\sum_{n\geq1}\sum_{k\geq0}2^k\binom{n+2k}{k}t^k(1-t)^kx^{n+k}dt\\    &=\int_0^1\frac{2x}{\sqrt{1-8xt(1-t)}\left(1-2x+\sqrt{1-8xt(1-t)}\right)}dt.
\end{align*}
Similarly,
\begin{align*}
&B(x)\nonumber\\&=\sum_{n\geq1}b_nx^n=\sum_{n\geq1}\sum_{k=0}^{n-1}\int_0^1\frac{\binom{2n}{n}}{2^{n-k}}t^k(1-t)^kx^ndt\\&=\int_0^1\frac{2x\left((2t^2-2t)\sqrt{1-2x}+2t^2-2t+1+\sqrt{4t^2x-4tx+1}\right)}
{(2t^2-2t+1)\sqrt{4t^2x-4tx+1}\left(1+\sqrt{4t^2x-4tx+1}\right)\sqrt{1-2x}\left(1+\sqrt{1-2x}\right)}dt,
\end{align*}
Thus,
\begin{align*}
A(x)=\frac{\pi-4\arctan\left(\sqrt{1-2x}\right)}{2\sqrt{1-2x}}=B(x)
\end{align*}
and therefore $a(n)=b(n)$.
\end{proof}

The first summand in the expression for $\mathcal{L}(GG_{2,n})$ defines \seqnum{A182525}, i.e., \begin{equation}\label{eq;700}
\seqnum{A182525}(n)=n!\sum_{k=0}^{n}\frac{\binom{2n}{2k}}{\binom{n}{k}}.
\end{equation}

In the following theorem, we establish the egf of the sequence.

\begin{theorem}
The egf $A(x)$ of \seqnum{A182525}  is given by $$A(x)=\frac{x\arctan\left(\frac{x}{\sqrt{1-2x}}\right)+\sqrt{1-2x}}{\left(\sqrt{1-2x}\right)^{3}}.$$
\end{theorem}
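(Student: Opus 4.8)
The plan is to follow the Trif/Beta-function method used throughout this section. Writing $A(x)=\sum_{n\geq0}\frac{\seqnum{A182525}(n)}{n!}x^n=\sum_{n\geq0}\sum_{k=0}^n\frac{\binom{2n}{2k}}{\binom{n}{k}}x^n$ and substituting $\binom{n}{k}^{-1}=(n+1)\int_0^1 t^k(1-t)^{n-k}\,dt$, I would interchange summation and integration to obtain
$$A(x)=\int_0^1\sum_{n\geq0}(n+1)x^n\sum_{k=0}^n\binom{2n}{2k}t^k(1-t)^{n-k}\,dt.$$

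First I would evaluate the inner sum over $k$ in closed form. Extracting the even-indexed terms of the binomial theorem gives
$$\sum_{k=0}^n\binom{2n}{2k}t^k(1-t)^{n-k}=\frac{1}{2}\left(\left(\sqrt{t}+\sqrt{1-t}\right)^{2n}+\left(\sqrt{t}-\sqrt{1-t}\right)^{2n}\right)=\frac{1}{2}\left(\alpha^n+\beta^n\right),$$
where $\alpha=1+2\sqrt{t(1-t)}$ and $\beta=1-2\sqrt{t(1-t)}$. The sum over $n$ is then elementary, since $\sum_{n\geq0}(n+1)z^n=(1-z)^{-2}$, leaving
$$A(x)=\frac{1}{2}\int_0^1\left(\frac{1}{(1-\alpha x)^2}+\frac{1}{(1-\beta x)^2}\right)dt.$$

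The remaining, and main, obstacle is the evaluation of this integral, which is where the arctangent must appear. I would combine the two fractions over the common denominator $(1-x)^2-4x^2t(1-t)$ and then rationalize $\sqrt{t(1-t)}$ by the substitution $t=\sin^2\phi$, followed by $c=\cos(2\phi)$, which turns $4t(1-t)$ into $1-c^2$ and sends $\int_0^1(\cdot)\,dt$ into $\frac12\int_{-1}^1(\cdot)\,dc$. A short computation reduces the integrand to a constant multiple of $\frac{1}{1-2x+x^2c^2}+\frac{2x^2(1-c^2)}{(1-2x+x^2c^2)^2}$ integrated over $c\in[-1,1]$. The factor $\int \frac{dc}{1-2x+x^2c^2}=\frac{1}{x\sqrt{1-2x}}\arctan\!\left(\frac{xc}{\sqrt{1-2x}}\right)$ is exactly the source of the $\arctan\!\left(\frac{x}{\sqrt{1-2x}}\right)$ term, and the powers of $\sqrt{1-2x}$ in the denominator arise from this together with the second, standard integral $\int\frac{dc}{(1-2x+x^2c^2)^2}$. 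Carrying out the bookkeeping (which the authors may delegate to Maple, as elsewhere in this section) and simplifying should yield $A(x)=\dfrac{x\arctan\!\left(\frac{x}{\sqrt{1-2x}}\right)+\sqrt{1-2x}}{\left(\sqrt{1-2x}\right)^{3}}$, as claimed.
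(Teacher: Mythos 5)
Your proposal is correct, and I checked that it closes to the stated egf. The skeleton is the same as the paper's: Trif's Beta-function substitution $\binom{n}{k}^{-1}=(n+1)\int_0^1t^k(1-t)^{n-k}\,dt$, interchange of summation and integration, a closed form for the integrand, then integration over the Beta variable. Indeed, your combined integrand $\tfrac12\left((1-\alpha x)^{-2}+(1-\beta x)^{-2}\right)=\frac{(1-x)^2+4t(1-t)x^2}{\left((1-x)^2-4t(1-t)x^2\right)^2}$ is exactly the rational function that the paper integrates. The difference lies in execution. The paper reindexes the double sum via $n\mapsto n+k$ and delegates both the closed form of that sum and the final $t$-integral to Maple; you instead collapse the inner sum by hand with the even-index binomial identity (the companion of the odd-index identity $\sum_{k\geq0}\binom{n}{2k+1}x^k=\frac{(1+\sqrt{x})^n-(1-\sqrt{x})^n}{2\sqrt{x}}$ that the paper itself invokes in the proof of Lemma \ref{lem;52}), sum over $n$ using $\sum_{n\geq0}(n+1)z^n=(1-z)^{-2}$, and evaluate the integral with the standard antiderivatives of $\frac{1}{a^2+b^2c^2}$ and $\frac{1}{(a^2+b^2c^2)^2}$; note that your trigonometric substitution $t=\sin^2\phi$, $c=\cos(2\phi)$ is simply the affine change of variable $c=1-2t$. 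What your version buys is a self-contained, computer-algebra-free proof whose only external inputs are two classical identities; what the paper's version buys is brevity and uniformity with the other Maple-assisted computations in the section. One cosmetic slip: the common denominator after combining the two fractions is the square $\left((1-x)^2-4x^2t(1-t)\right)^2$, not $(1-x)^2-4x^2t(1-t)$ itself, though your subsequent reduction uses the correct expression.
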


\begin{proof}
Denote by $a_n$ the right-hand side of (\ref{eq;700}). We have
\begin{align*}
\pushQED{\qed}
A(x)&=\sum_{n\geq0}\frac{a_n}{n!}x^n=\sum_{n\geq0}\sum_{k=0}^{n}\int_0^1(n+1)\binom{2n}{2k}t^k(1-t)^{n-k}x^ndt\\
&=\int_0^1\sum_{n\geq0}\sum_{k\geq0}(n+1+k)\binom{2(n+k)}{k}t^k(1-t)^{n}x^{n+k}dt\\    &=\int_0^1\frac{(x-1)^{2}-4t(t-1)x^{2}}{((x-1)^{2}+4t(t-1)x^{2})^{2}}dt\\&=\frac{x\arctan\left(\frac{x}{\sqrt{1-2x}}\right)+\sqrt{1-2x}}{\left(\sqrt{1-2x}\right)^{3}}.\qedhere
\end{align*}
\end{proof}

We conclude our work by establishing two identities in the spirit of (\ref{eq;881}).
\begin{lemma}\label{lem;52}
We have \begin{align}
\sum_{k=0}^{n}\frac{\binom{2n+1}{2k}}{\binom{n}{k}}&=1+\frac{2n+1}{2}\int_{0}^{\pi/2}\left(1+\sin(2t)\right)^{n}-\left(1-\sin(2t)\right)^{n}dt,\nonumber\\\sum_{k=0}^{n}\frac{\binom{2n}{2k}}{\binom{n}{k}}&=1+n\int_{0}^{\pi/2}\cos t\left(\left(\cos t+\sin t\right)^{2n-1}-\left(\cos t-\sin t\right)^{2n-1}\right)dt.\nonumber\end{align}
\end{lemma}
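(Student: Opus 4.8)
The plan is to apply Trif's Beta-function method \cite{Tr} that is used throughout this section. Writing $\binom{n}{k}^{-1}=(n+1)\int_0^1 t^k(1-t)^{n-k}\,dt$ and interchanging sum and integral, the two left-hand sides become
$$(n+1)\int_0^1\sum_{k=0}^{n}\binom{2n+1}{2k}t^k(1-t)^{n-k}\,dt\quad\text{and}\quad(n+1)\int_0^1\sum_{k=0}^{n}\binom{2n}{2k}t^k(1-t)^{n-k}\,dt.$$
In each case the inner sum is an even-index binomial sum: after factoring out $(1-t)^n$ and setting $y=\sqrt{t/(1-t)}$, I would use $\sum_{j\ \mathrm{even}}\binom{N}{j}y^j=\tfrac12\bigl((1+y)^N+(1-y)^N\bigr)$ to collapse it into $\tfrac12\bigl((\sqrt{1-t}+\sqrt t)^{N}+(\sqrt{1-t}-\sqrt t)^{N}\bigr)$ times an explicit power of $(1-t)$, with $N=2n+1$ and $N=2n$ respectively.

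First I would substitute $t=\sin^2\theta$, so that $\sqrt t=\sin\theta$, $\sqrt{1-t}=\cos\theta$ and $dt=\sin2\theta\,d\theta$. Writing $a=\cos\theta+\sin\theta$ and $b=\cos\theta-\sin\theta$ (so $a^2=1+\sin2\theta$, $b^2=1-\sin2\theta$, $a'=b$, $b'=-a$), the two integrals turn into $(n+1)\int_0^{\pi/2}(a^{2n+1}+b^{2n+1})\sin\theta\,d\theta$ and $\tfrac{n+1}{2}\int_0^{\pi/2}(a^{2n}+b^{2n})\sin2\theta\,d\theta$. Setting $s=\sin2\theta$ and $D_n:=\int_0^{\pi/2}\bigl((1+s)^n-(1-s)^n\bigr)\,d\theta$, the goal is to show that the first integral equals $1+\tfrac{2n+1}{2}D_n$ and the second equals $1+\tfrac{n}{2}D_n$, since the stated right-hand sides are exactly $1+\tfrac{2n+1}{2}D_n$ and $1+n\int_0^{\pi/2}\cos\theta\,(a^{2n-1}-b^{2n-1})\,d\theta$.

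The computational heart is a single integration by parts. With $c=\cos2\theta$, I would establish the two facts
$$\int_0^{\pi/2}c\,(1\pm s)^m\,d\theta=0\quad(m\ge0),\qquad \int_0^{\pi/2}s\bigl((1+s)^n+(1-s)^n\bigr)\,d\theta=\frac{2+nD_n}{n+1};$$
the first holds because $\tfrac{1}{2(m+1)}(1\pm s)^{m+1}$ is an antiderivative whose values at $\theta=0,\pi/2$ coincide (as $s=0$ there), and the second follows by integrating by parts with $dv=\sin2\theta\,d\theta$, using $c^2=(1-s)(1+s)$ and solving the resulting linear equation for the integral. Expanding $a^{2n+1}\sin\theta$ and $b^{2n+1}\sin\theta$ in terms of $s$ and $c$, the first trigonometric integral splits into $\tfrac12 D_{n+1}$ plus a multiple of $\int_0^{\pi/2}c\,(1\pm s)^n\,d\theta=0$; combined with $D_{n+1}=D_n+\tfrac{2+nD_n}{n+1}$ this gives $1+\tfrac{2n+1}{2}D_n$. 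The second integral is directly $\tfrac{n+1}{2}$ times the second displayed fact, giving $1+\tfrac{n}{2}D_n$. Finally, to match the second right-hand side I would expand $\cos\theta=(a+b)/2$, observe that the cross terms combine into $\int_0^{\pi/2}c\,(a^{2n-2}-b^{2n-2})\,d\theta$, which vanishes by the first fact, and conclude $\tfrac12 D_n=\int_0^{\pi/2}\cos\theta\,(a^{2n-1}-b^{2n-1})\,d\theta$.

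I expect the main obstacle to be bookkeeping rather than conceptual: keeping the boundary terms and signs straight through the integration by parts, and recognizing that the two superficially different right-hand sides of the lemma are just two closed forms built from the single quantity $D_n$. In particular, the isolated constant $1$ appearing in each identity is precisely the surviving boundary contribution, so care is needed to ensure it is neither lost nor double-counted.
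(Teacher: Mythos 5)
Your proposal is correct --- every step checks out (I verified the boundary computation giving the term $2$, the identity $c^2 = (1-s)(1+s)$ that drives the integration by parts, the recursion $D_{n+1}=D_n+\tfrac{2+nD_n}{n+1}$, and the vanishing of $\int_0^{\pi/2}c\,(1\pm s)^m\,d\theta$) --- but it takes a genuinely different and longer route than the paper. The paper exploits the \emph{other} form of Trif's identity stated in its preliminaries, $\binom{n}{k}^{-1}=kB(k,n-k+1)$, applied only for $k\geq 1$: the $k=0$ term then splits off immediately as the constant $1$, and the factor $k$ is absorbed via $2k\binom{2n+1}{2k}=(2n+1)\binom{2n}{2k-1}$, turning the even-index binomial sum into an \emph{odd}-index one. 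The odd-index sum collapses by $\sum_k\binom{N}{2k+1}x^k=\frac{(1+\sqrt{x})^N-(1-\sqrt{x})^N}{2\sqrt{x}}$ directly into the \emph{difference} of powers appearing on the right-hand side, so after the substitution $x=\sin^2 t$ the proof is finished with no further manipulation. Your choice of the uniform identity $\binom{n}{k}^{-1}=(n+1)B(k+1,n-k+1)$ instead produces an even-index sum, hence a \emph{sum} of powers $a^N+b^N$, which does not match the target; bridging that gap is exactly what forces your integration-by-parts machinery, the $D_n$ recursion, and the vanishing lemma, with the constant $1$ reappearing as a boundary term rather than as the $k=0$ summand. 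What your approach buys is self-contained verifiability from a single standard Beta representation and it handles both identities symmetrically through the one quantity $D_n$; what the paper's choice buys is economy --- the right form of the Beta identity makes the answer fall out in one chain of equalities.
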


\begin{proof}
We prove only the first identity, the second being similar. We have
\begin{align}
\sum_{k=0}^{n}\frac{\binom{2n+1}{2k}}{\binom{n}{k}}&=1+\sum_{k=1}^{n}\binom{2n+1}{2k}kB(k,n+1-k)\nonumber\\&=1+\sum_{k=1}^{n}\binom{2n+1}{2k}k\int_{0}^{1}x^{k-1}(1-x)^{n-k}dx\nonumber\\&=1+\frac{2n+1}{2}\int_{0}^{1}\sum_{k=1}^{n}\binom{2n}{2k-1}x^{k-1}(1-x)^{n-k}dx\nonumber\\&=1+\frac{2n+1}{2}\int_{0}^{1}(1-x)^{n-1}\sum_{k=0}^{n-1}\binom{2n}{2k+1}\left(\frac{x}{1-x}\right)^{k}dx\nonumber\\&=1+\frac{2n+1}{2}\int_{0}^{1}(1-x)^{n-1}\frac{\left(1+\sqrt{\frac{x}{1-x}}\right)^{2n}-\left(1-\sqrt{\frac{x}{1-x}}\right)^{2n}}{2\sqrt{\frac{x}{1-x}}}dx\nonumber\\&=1+\frac{2n+1}{2}\int_{0}^{1}\frac{\left(\sqrt{1-x}+\sqrt{x}\right)^{2n}-\left(\sqrt{1-x}-\sqrt{x}\right)^{2n}}{2\sqrt{x(1-x)}}dx\nonumber\\&=1+\frac{2n+1}{2}\int_{0}^{\pi/2}\left(\cos t+\sin t\right)^{2n}-\left(\cos t-\sin t\right)^{2n}dt\nonumber\\&=1+\frac{2n+1}{2}\int_{0}^{\pi/2}\left(1+\sin(2t)\right)^{n}-\left(1-\sin(2t)\right)^{n}dt\nonumber
\end{align} where in the fifth equality we used that, for $n\geq 1$, $$\sum_{k=0}^{\left\lfloor \frac{n-1}{2}\right\rfloor }\binom{n}{2k+1}x^{k}=\frac{(1+\sqrt{x})^{n}-(1-\sqrt{x})^{n}}{2\sqrt{x}} $$ (e.g., \cite[(1.95)]{Gou}).
\end{proof}

\end{document}